\newcommand{\R}{\mathbb{R}}
\newcommand{\E}{\mathbb{E}}
\newcommand{\inr}[2]{\langle #1, #2 \rangle}
\newcommand{\pdiffII}[3]{\ifstrequal{#2}{#3}
{\frac{\partial^2 #1}{\partial #2^2}}
{\frac{\partial^2 #1}{\partial #2 \partial #3}}
}
\newcommand{\diffII}[3]{\ifthenelse{\equal{#2}{#3}}
{\frac{d^2 #1}{d #2^2}}
{\frac{d^2 #1}{d #2 d #3}}
}
\newcommand{\diff}[2]{\frac{d #1}{d #2}}
\let\Pr\relax
\DeclareMathOperator{\Pr}{Pr}
\DeclareMathOperator{\Id}{Id}
\DeclareMathOperator{\supp}{supp}
\newcommand{\toD}{\stackrel{d}{\to}}
\newtheorem{theorem}{Theorem}[section]
\newtheorem{lemma}[theorem]{Lemma}
\newtheorem{proposition}[theorem]{Proposition}
\newtheorem{definition}[theorem]{Definition}
\newtheoremstyle{example}{\topsep}{\topsep}%
     {}
     {}
     {\bfseries}
     {}
     {\newline}
     {\thmname{#1}\thmnumber{ #2}\thmnote{ #3}}
\theoremstyle{example}
\title{Lipschitz changes of variables via heat flow}
\author{Joe Neeman}
\begin{document}

\maketitle

\begin{abstract}
We extend Caffarelli's contraction theorem, by proving that
there exist a Lipschitz changes of variables between the Gaussian measure
and certain perturbations of it. Our approach is based on an argument
due to Kim and Milman, in which the changes of variables are constructed
using a heat flow.
\end{abstract}

\section{Introduction}
Caffarelli~\cite{Caffarelli:00} showed that every probability measure that is
log-concave with respect to Gaussian can be realized as a contraction of the
Gaussian measure: if $d\mu = e^{-V(x)} \, d\gamma$ is a probability measure
(here and throughout, $\gamma$ denotes the standard Gaussian measure on $\R^n$),
and $V$ is
convex, then there is contraction $T: \R^n \to \R^n$ such that $T_\# \gamma =
\mu$. (Here $T_\#$ denotes the push-forward of a measure under $T$, defined by
$(T_\# \gamma)(A) = \gamma(T^{-1}(A))$ for all Borel sets $A$.) Caffarelli's
contraction theorem has found many applications (see~\cite{CoFiJh:17} for more background), mainly
because many important geometric and functional
inequalities are preserved under contractions.
Hence, if one is able such an inequality for $\gamma$, it also holds for every
push-forward of $\gamma$ under a contraction.

It is natural to ask whether contractions (or, more generally, Lipschitz maps)
pushing $\gamma$ onto $\mu = e^{-V(x)}\, d\gamma$ can be found under more
general conditions. In the one-dimensional case, Bobkov~\cite{Bobkov:10}
showed that a bounded perturbation of a log-concave probability measure
is a Lipschitz image of the Gaussian measure:

\begin{theorem}\label{thm:bobkov}
    Suppose that $V^*: \R \to \R$ is a convex function, and suppose that $e^{-V} d\gamma$
    is a probability measure on $\R$, where $V^*(x) \le V(x) \le V^*(x) + c$ for all $x \in \R$.
    Then there is an $e^c$-Lipschitz map pushing $\gamma$ onto $e^{-V} d\gamma$.
\end{theorem}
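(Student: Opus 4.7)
The plan is to work with the canonical one-dimensional monotone transport $T(x) := F_\mu^{-1}(\Phi(x))$ from $\gamma$ to $\mu = e^{-V}\,d\gamma$, whose Lipschitz constant equals $\sup_x T'(x) = \sup_x \varphi(x)/f_\mu(T(x))$. So everything reduces to bounding this ratio by $e^c$.

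To invoke Caffarelli, I would first normalize $V^*$: the hypothesis forces $Z^* := \int e^{-V^*}\,d\gamma \in [1, e^c]$, so $W^* := V^* + \log Z^*$ is convex and $d\mu^* := e^{-W^*}\,d\gamma$ is a log-concave probability measure. Caffarelli's theorem then provides the $1$-Lipschitz monotone map $S(x) := F_{\mu^*}^{-1}(\Phi(x))$ from $\gamma$ to $\mu^*$, which in particular gives $\varphi(x) \le f_{\mu^*}(S(x))$. The hypothesis also delivers the pointwise sandwich $e^{-c}Z^* f_{\mu^*} \le f_\mu \le Z^* f_{\mu^*}$, and integrating from $-\infty$ and from $+\infty$ yields the analogous comparisons of $F_\mu$ with $F_{\mu^*}$ and of $1 - F_\mu$ with $1 - F_{\mu^*}$. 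Combined with the matching condition $F_\mu(T(x)) = \Phi(x) = F_{\mu^*}(S(x))$, these bounds pin $T(x)$ inside the interval
\[ \bigl[\, F_{\mu^*}^{-1}(\Phi(x)/Z^*),\ F_{\mu^*}^{-1}(1 - (1-\Phi(x))/Z^*)\, \bigr]. \]

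The crux of the argument will be to upgrade this containment into the density inequality $f_{\mu^*}(T(x)) \ge f_{\mu^*}(S(x))/Z^*$. This is where log-concavity of $\mu^*$ is used twice: (i) $f_{\mu^*}$ is a log-concave function on $\R$, so its minimum over any interval is attained at an endpoint; and (ii) the isoperimetric profile $I(u) := f_{\mu^*}(F_{\mu^*}^{-1}(u))$ is concave on $[0,1]$ with $I(0) = I(1) = 0$, since $I''(u) = (\log f_{\mu^*})''(F_{\mu^*}^{-1}(u))/f_{\mu^*}(F_{\mu^*}^{-1}(u)) \le 0$. Setting $u = \Phi(x)$ and using concavity of $I$ together with the vanishing boundary values yields $I(u/Z^*) \ge I(u)/Z^*$ and $I(1-(1-u)/Z^*) \ge I(u)/Z^*$; since these are the $f_{\mu^*}$-values at the two endpoints of the interval containing $T(x)$, (i) then transfers the lower bound to $T(x)$ itself.

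The rest is a short computation assembling the pieces:
\[ T'(x) = \frac{\varphi(x)}{f_\mu(T(x))} \le \frac{e^c}{Z^*} \cdot \frac{\varphi(x)}{f_{\mu^*}(T(x))} \le \frac{e^c}{Z^*} \cdot \frac{Z^*\varphi(x)}{f_{\mu^*}(S(x))} = e^c\, S'(x) \le e^c. \]
The main obstacle is the crux step, which is where the log-concavity of the dominating measure genuinely enters the argument; the density sandwich, the appeal to Caffarelli, and the closing inequality chain are all routine once that step is in place.
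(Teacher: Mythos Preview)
The paper does not prove Theorem~\ref{thm:bobkov}; it is quoted from Bobkov~\cite{Bobkov:10} as background, so there is no in-paper proof to compare against.

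That said, your argument is correct and is essentially the classical one-dimensional monotone-transport argument. A few remarks on the details: your use of ``Caffarelli's theorem'' to conclude $S'(x)\le 1$ is overkill in one dimension, since $S'(x)=\varphi(x)/f_{\mu^*}(S(x))\le 1$ follows directly from the well-known fact that the isoperimetric profile $I$ of a log-concave probability measure on $\R$ dominates that of $\gamma$ (or, equivalently, from a short direct computation); but invoking Caffarelli is certainly valid. Your ``crux'' step is the heart of the matter, and your justification is clean: $f_{\mu^*}$ is log-concave, hence unimodal, so its infimum on an interval is at an endpoint; and the concavity of $I$ on $[0,1]$ together with $I(0)=I(1)=0$ (which holds here because $f_{\mu^*}(y)\to 0$ as $|y|\to\infty$, thanks to the Gaussian weight) gives $I(u/Z^*)\ge I(u)/Z^*$ and $I(1-(1-u)/Z^*)\ge I(u)/Z^*$ by the chord inequalities. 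The final chain of inequalities is then exactly as you wrote it.

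In short: the proof is sound, and it is in the spirit of Bobkov's original argument; there is nothing in the present paper to contrast it with.
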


By combining this result with a localization argument, Bobkov was able to deduce functional
inequalities for perturbations of Gaussian measure in $\R^n$. However, he did not
establish the existence of Lipschitz changes of variables between $n$-dimensional
probability measures.

Colombo et al.~\cite{CoFiJh:17} studied Lipschitz changes of variables between more
general measures, but for the sake of brevity we will quote their result in the case
that the underlying measure is Gaussian.

\begin{theorem}\label{thm:colombo}
    Suppose that $e^{-V} d\gamma$ is a probability measure on $\R^n$,
    where $D^2 V \ge -\lambda$ (for $\lambda \ge 1$), and $V$ is constant outside of $B(0, R)$
    (the Euclidean ball of radius $R$). Then $e^{-V} d\gamma$ is an $L$-Lipschitz push-forward
    of $\gamma$, for some $L$ depending on $R$ and $\lambda$.
\end{theorem}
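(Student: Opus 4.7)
The plan is to adapt the Kim--Milman heat-flow construction advertised in the abstract. Write $f := d\mu/d\gamma = e^{-V}$ and set $f_t := P_t f$, where $(P_t)_{t\ge 0}$ is the Ornstein--Uhlenbeck semigroup on $\R^n$; then $f_0 = f$ and $f_t \to 1$ as $t \to \infty$, so $\mu_t := f_t\gamma$ interpolates between $\mu$ and $\gamma$. Using the identity $\nabla\cdot(\gamma\nabla f_t) = (\Delta f_t - x\cdot\nabla f_t)\gamma = (\partial_t f_t)\gamma$, one checks that $\rho_t := f_t\gamma$ satisfies the continuity equation $\partial_t\rho_t + \nabla\cdot(\rho_t v_t) = 0$ with velocity $v_t := -\nabla\log f_t$. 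The flow $\dot T_t = v_t(T_t)$, $T_0 = \id$, then pushes $\mu$ forward along this interpolation, i.e.\ $(T_t)_\#\mu = \mu_t$, and so $T_\infty$ pushes $\mu$ onto $\gamma$. The desired Lipschitz map $\gamma \to \mu$ is $S := T_\infty^{-1}$.

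The Lipschitz constant of $S$ equals $\sup_x 1/\sigma_{\min}(DT_\infty(x))$, so it suffices to bound from below the smallest singular value of the Jacobian $J_t := DT_t$. Differentiating the flow in its initial condition gives $\dot J_t = -\Hess(\log f_t)(T_t)\,J_t$, and a Gronwall-type estimate yields
\[
\sigma_{\min}(J_\infty(x)) \;\ge\; \exp\!\Bigl(-\!\int_0^\infty\! \Lambda(t)\,dt\Bigr), \qquad \Lambda(t) := \sup_{y\in\R^n} \lambda_{\max}\!\bigl(\Hess\log f_t(y)\bigr).
\]
The theorem reduces to producing an integrable-in-$t$ upper bound on $\Lambda(t)$.

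For this I would combine two ingredients. First, dropping the non-negative rank-one term yields $\Hess\log f_t = \Hess f_t/f_t - (\nabla f_t\otimes\nabla f_t)/f_t^2 \le \Hess f_t/f_t$. Second, the OU commutation $\Hess P_t f = e^{-2t}P_t(\Hess f)$, combined with the pointwise identity $\Hess f = e^{-V}(\nabla V\otimes\nabla V - \Hess V)$ and the hypothesis $\Hess V \ge -\lambda I$, gives $\Hess f_t \le e^{-2t}\|e^{-V}(|\nabla V|^2 + \lambda)\|_\infty\, I$. Together with the lower bound $f_t \ge e^{-\|V\|_\infty}$, this produces $\Lambda(t) \le C(V)\,e^{-2t}$, which is integrable, so $S$ is Lipschitz with constant at most $\exp(C(V)/2)$.

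The main obstacle is making the resulting constant depend only on $R$ and $\lambda$. Convexity of $W := V + \tfrac{\lambda}{2}|x|^2$ together with $V \equiv c$ on $\R^n\setminus B(0,R)$ bounds $V$ from above on $B(0,R)$ by $c + \lambda R^2/2$ via the maximum principle for convex functions, and the normalization $\int e^{-V}\,d\gamma = 1$ pins down $c$ via Gaussian tail estimates. A uniform control of $\|\nabla V\|_\infty$ in terms of $R$ and $\lambda$ alone is more subtle, and is likely where the real technical work sits; a natural route is to bypass the crude pointwise bound above by invoking the Brascamp--Lieb-type identity
\[
\Hess(-\log P_t e^{-V})(x) = e^{-2t}\!\left(\E_{\nu_{t,x}}\!\Hess V - \Cov_{\nu_{t,x}}\!\nabla V\right),
\]
where $\nu_{t,x}$ is the Gaussian tilted by $z\mapsto e^{-V(e^{-t}x + \sqrt{1-e^{-2t}}z)}$, and to control $\Cov_{\nu_{t,x}}\nabla V$ through averaged rather than pointwise quantities of $\nabla V$.
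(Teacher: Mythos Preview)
First, note that the paper does not itself prove Theorem~\ref{thm:colombo}: it is quoted from Colombo--Figalli--Jhaveri, whose argument goes through the Brenier map rather than the heat flow. The paper's own contribution is Theorem~\ref{thm:main}, proved via exactly the Kim--Milman scheme you describe (your Gronwall step is the paper's Lemma~\ref{lem:KM}); Theorem~\ref{thm:colombo} follows from Theorem~\ref{thm:main} once one checks that its hypotheses force $\sup V - \inf V$ to be bounded in terms of $R$ and $\lambda$ alone.

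That check also dissolves your ``main obstacle.'' Set $W := V + \tfrac{\lambda}{2}|x|^2$; this is convex on $\R^n$ and equals $c + \tfrac{\lambda}{2}|x|^2$ outside $B(0,R)$. On any line through $B(0,R)$ the directional derivative of $W$ is monotone and matches that of $c+\tfrac{\lambda}{2}|x|^2$ at the two boundary crossings, hence lies in $[-\lambda R,\lambda R]$ throughout; so $|\nabla W|\le \lambda R$ on $B(0,R)$ and $\|\nabla V\|_\infty \le 2\lambda R$. Integrating the same gradient bound gives $\sup V - \inf V \le \tfrac{5}{2}\lambda R^2$. Plugging these into your estimate $\Lambda(t) \le e^{-2t}\, e^{\sup V - \inf V}\bigl(\|\nabla V\|_\infty^2 + \lambda\bigr)$ already produces an integrable bound depending only on $(R,\lambda)$; the tilted-Gaussian covariance identity you propose is not needed.

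Your route differs from the paper's only in how $\Lambda(t)$ is controlled. To handle the weaker hypotheses of Theorem~\ref{thm:main} (where $\|\nabla V\|_\infty$ need not be finite), the paper must split into two regimes: a Pr\'ekopa-type bound for small $t$ (Lemma~\ref{lem:log-concave-implies-log-concave}) and the estimate $\Lambda(t)\le e^{\sup V-\inf V}/(e^{2t}-1)$ for large $t$ (Lemma~\ref{lem:bounded-implies-log-concave}), the latter obtained by expressing $D^2 P_t f$ in terms of $f$ rather than $D^2 f$. Under the compact-support hypothesis of Theorem~\ref{thm:colombo} your single bound $C(R,\lambda)\,e^{-2t}$ is actually simpler, but it does not extend to Theorem~\ref{thm:main}.
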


Our main result is to replace the compactly supported perturbation of Theorem~\ref{thm:colombo}
with a bounded perturbation.

\begin{theorem}\label{thm:main}
    Suppose that $d\mu = e^{-V(x)} d\gamma$ is a probability measure on $\R^n$,
    where $D^2 V \ge -\lambda$ (for $\lambda \ge 1$), and $\sup V - \inf V \le c$.
    Then $\mu$ is an $L$-Lipschitz push-forward of $\gamma$, for
    \[
        L = 2 (2\lambda)^{e^{c}}.
    \]
\end{theorem}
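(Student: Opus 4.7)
The plan is to follow the heat-flow construction of Kim and Milman. Let $P_t$ denote the Ornstein--Uhlenbeck semigroup and set $f_t := P_t e^{-V}$, so that $f_t\, d\gamma$ interpolates between $\mu$ at $t=0$ and $\gamma$ at $t=\infty$. The continuity equation for this family is driven by the velocity field $v_t := -\nabla \log f_t$, so I would construct $T$ by solving the ODE $\dot X_t = v_t(X_t)$ backwards in $t$ with terminal condition $X_\infty = y$ and setting $T(y) := X_0$; this gives $T_\#\gamma = \mu$. Differentiating in $y$ along the flow and using $\nabla v_t = -H_t$ with $H_t := D^2 \log f_t$ yields the basic bound
\[
\|\nabla T\|_{\mathrm{op}} \;\le\; \exp\!\left(\int_0^\infty \alpha_t \, dt\right), \qquad \alpha_t := \sup_{x} \lambda_{\max}\!\bigl(H_t(x)\bigr),
\]
reducing the theorem to the integral estimate $\int_0^\infty \alpha_t\, dt \le \log 2 + e^c \log(2\lambda)$.

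I would bound $\alpha_t$ using two complementary estimates. First, differentiating the equation $\partial_t \log f_t = L\log f_t + |\nabla \log f_t|^2$ (with $L = \Delta - x\cdot\nabla$) twice in space gives
\[
\partial_t H_t = L H_t + 2 H_t^2 - 2 H_t + 2\nabla\log f_t \cdot \nabla H_t.
\]
At a spatial maximum of a principal eigenvalue, the Laplacian and drift terms are favorable, yielding the scalar inequality $\dot\alpha_t \le 2\alpha_t(\alpha_t - 1)$. Since $\alpha_0 \le \lambda$ by semi-convexity, this controls $\alpha_t$ explicitly on an interval $[0, t_1]$ well short of the comparison ODE's blow-up time $t^* \sim 1/\lambda$. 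Second, Gaussian integration by parts yields
\[
D^2 f_t(x) \;=\; \frac{e^{-2t}}{1-e^{-2t}}\, \E\!\bigl[f_0(x_t)(ZZ^\top - I)\bigr], \qquad x_t = e^{-t}x + \sqrt{1-e^{-2t}}\,Z,
\]
and combining this with the bounded-oscillation hypothesis (which, after normalizing $\int f_0\, d\gamma = 1$, forces $e^{-c} \le f_0 \le e^c$ and hence $f_t \ge e^{-c}$) produces an upper bound on $\alpha_t$ that decays like $e^{-2t}$ for large $t$ and is integrable away from $t=0$.

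I would then split the integral at a cutoff $t_1$, using the maximum-principle bound on $[0, t_1]$ and the heat-regularization bound on $[t_1, \infty)$, and optimize in $t_1$. The main obstacle, I expect, is obtaining the sharp dependence $e^c$ rather than $e^{2c}$ in the exponent of $\lambda$: the crude heat-regularization estimate above picks up one factor of $e^c$ from $\|f_0\|_\infty$ and another from $1/f_t$. To sharpen this, I would use the tilted-measure identity
\[
H_t(x) \;=\; -e^{-2t}\Bigl(\E_{Q_{t,x}} D^2 V(x_t) \;-\; \Cov_{Q_{t,x}}\!\bigl(\nabla V(x_t)\bigr)\Bigr),
\]
where $Q_{t,x}$ is the probability measure on $\R^n$ with density proportional to $e^{-V(x_t)}$ with respect to $\gamma$. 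The semi-convexity bounds the first term by $-\lambda I$; meanwhile the bounded-oscillation assumption pinches $dQ_{t,x}/d\gamma$ between $e^{-c}$ and $e^c$, so Holley--Stroock combined with the Gaussian Poincar\'e inequality should bound the covariance term in a way that introduces only one factor of $e^c$, producing the claimed constant $L = 2(2\lambda)^{e^c}$.
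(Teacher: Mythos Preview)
Your plan follows the paper's proof almost exactly: the Kim--Milman heat flow, the reduction to bounding $\int_0^\infty \alpha_t\,dt$ where $\alpha_t$ controls the log-concavity defect of $f_t$, and a split of that integral at a cutoff $t_1$ of order $1/\lambda$, with the semi-convexity hypothesis handling $[0,t_1]$ and the bounded-oscillation hypothesis handling $[t_1,\infty)$.

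Two remarks on the differences. For the small-$t$ regime you propose a parabolic maximum-principle argument on the evolution of $H_t$; this recovers exactly the bound the paper obtains (the comparison ODE $\dot\alpha\le 2\alpha(\alpha-1)$ with $\alpha(0)=\lambda$ has solution $\alpha(t)=\lambda e^{-2t}/(1-\lambda(1-e^{-2t}))$), but it requires justifying that the spatial supremum of the top eigenvalue is actually attained. The paper sidesteps this entirely by an algebraic argument via Pr\'ekopa's theorem: it writes $f_t(x)e^{-\kappa|x|^2/2}$ as a Gaussian marginal of a jointly log-concave function in $(x,y)$, for the appropriate $\kappa$, which yields the same log-concavity bound with no analysis of maxima.

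More importantly, your worry about picking up $e^{2c}$ in the large-$t$ estimate is misplaced, and the Holley--Stroock detour is both unnecessary and problematic. The direct computation already gives a single $e^c$: one has $D^2_{vv} f_t(x)\le \|f\|_\infty/(e^{2t}-1)$ and $f_t\ge \inf f$, so
\[
D^2_{vv}\log f_t \;\le\; \frac{\|f\|_\infty}{\inf f}\cdot\frac{1}{e^{2t}-1} \;=\; \frac{e^{\sup V-\inf V}}{e^{2t}-1} \;\le\; \frac{e^c}{e^{2t}-1}.
\]
Bounding $\|f\|_\infty\le e^c$ and $\inf f\ge e^{-c}$ separately throws away a factor; it is only their ratio that enters. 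As for your proposed fix, applying Poincar\'e to $\Cov_{Q_{t,x}}(\nabla V(x_t))$ would require control on $|D^2 V|$ from above, which is not among the hypotheses.
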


We remark that our techniques are related to those of Mikulincer
and Shenfeld~\cite{MikulincerShenfeld:2022}, who showed the existence of
Lipschitz push-forwards in several other situations.

\section{The heat-flow map}

Whereas Caffarelli~\cite{Caffarelli:00} and Colombo et al.~\cite{CoFiJh:17}
studied the Brenier map arising from optimal transport, we will establish
Theorem~\ref{thm:main} using a map coming from the heat flow. This map
previously appeared~\cite{OttoVillani:00} in the study of functional inequalities.
It was used by Kim and Milman~\cite{KimMilman:12} to give an alternate
proof of Caffarelli's contraction theorem, and it was further studied by Tanana~\cite{Tanana:17},
who showed that in general it differs from the Brenier map.

We begin by defining the Ornstein-Uhlenbeck semigroup: for $t > 0$ and $f: \R^n \to \R$
bounded and measurable, define $P_t f$ by
\[
    (P_t f)(x) = \E f(e^{-t} x + \sqrt{1-e^{-2t}} Z),
\]
where $Z$ denotes a standard Gaussian random variable. One can easily see that $P_t f$
is also a bounded, measurable function, and that $P_t$ is a semigroup in
the sense that $P_t P_s f = P_{t+s} f$ for any $s$, $t$, and $f$. Moreover,
Jensen's inequality implies that $P_t$ is a contraction on every $L^p(\gamma)$ space,
meaning that we can extend $P_t$ to $L^p(\gamma)$ by density.
It is well-known that for every $1 \le p < \infty$, $P_t f$ converges
in $L^p(\gamma)$ to the constant $\E f$ as $t \to \infty$.

From now on, we will adopt the convention that $f = e^{-V}$ is a probability density with
respect to $\gamma$. We write $f_t$ for $P_t f$ and $V_t$ for $-\log P_t f$.
The main idea of Kim and Milman~\cite{KimMilman:12} is to consider a flow $S_t: \R^n \to \R^n$
satisfying
\begin{equation}\label{eq:St}
    \frac{dS_t(x)}{dt} = (\nabla V_t)(S_t(x)).
\end{equation}
Assuming for the moment that such a flow exists, it is not hard to show that it satisfies
$(S_t)_\# (fd\gamma) = f_t d\gamma$. In the $t \to \infty$ limit, $f_t \to 1$ and
we obtain a map pushing $f d\gamma$ onto $\gamma$.

In order to have sufficient regularity for the existence of $S_t$, we will
apply an approximation argument: we will first establish the result for
sufficiently regular $f$, and then take a limit. This procedure is
straightforward, and has already been discussed in~\cite{KimMilman:12}.
We will include a sketch, however.

We first observe that in order to establish the existence of Lipschitz
maps between two measures, it suffices to approximate both of them in distribution:

\begin{lemma}\label{lem:approximation}
    Let $\mu$ and $\nu$ be two probability measures on $\R^n$; suppose that $\nu_k$
    and $\mu_k$ are sequences of probability measures
    converging in distribution to $\nu$ and $\mu$ respectively.
    Suppose, moreover, that for every $k$ there exists an $L$-Lipschitz map $T_k$
    with $(T_k)_\# \nu_k = \mu_k$. Then there is a $L$-Lipschitz map $T$
    with $T_\# \nu = \mu$.
\end{lemma}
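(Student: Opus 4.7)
The plan is a compactness argument: I will extract a locally uniformly convergent subsequence of the $T_k$ using Arzelà–Ascoli, and then pass the pushforward identity to the limit using tightness.

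First, I would control $\{T_k(0)\}$. The maps $T_k$ are $L$-Lipschitz, so for any $R > 0$ the image $T_k(B(0,R))$ is contained in the ball of radius $LR$ around $T_k(0)$. Hence
\[
    \mu_k\bigl(\R^n \setminus B(T_k(0), LR)\bigr) \ \le\ \nu_k\bigl(\R^n \setminus B(0,R)\bigr).
\]
By weak convergence $\nu_k \to \nu$, the right-hand side can be made less than, say, $1/4$ uniformly in $k$ by choosing $R$ large; and by weak convergence $\mu_k \to \mu$ together with tightness, there is a fixed $M$ with $\mu_k(B(0,M)) \ge 3/4$ for all $k$. These two facts force $B(T_k(0), LR) \cap B(0, M) \ne \emptyset$, so $\{T_k(0)\}$ is bounded. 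Together with the uniform Lipschitz bound, this makes $\{T_k\}$ equicontinuous and pointwise bounded, so Arzelà–Ascoli yields a subsequence (still denoted $T_k$) converging locally uniformly to some map $T$. The limit $T$ is automatically $L$-Lipschitz.

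Next I would verify $T_\# \nu = \mu$ by showing $(T_k)_\# \nu_k \toWeak T_\# \nu$, since we already have $(T_k)_\# \nu_k = \mu_k \toWeak \mu$. Fix a bounded continuous test function $\phi$. For $\eps > 0$, tightness of $\{\nu_k\}$ gives a compact $K \subset \R^n$ with $\nu_k(K) \ge 1 - \eps$ for every $k$. On $K$, $T_k \to T$ uniformly, and since the images $T_k(K) \cup T(K)$ lie in a single compact set $K'$ on which $\phi$ is uniformly continuous, $\phi \circ T_k \to \phi \circ T$ uniformly on $K$. Splitting
\[
    \int \phi \circ T_k \, d\nu_k \ =\ \int_K \phi \circ T_k \, d\nu_k + \int_{K^c} \phi \circ T_k \, d\nu_k,
\]
the second term is bounded by $\eps \|\phi\|_\infty$ uniformly in $k$, while the first converges to $\int_K \phi \circ T \, d\nu$ by combining uniform convergence on $K$ with weak convergence $\nu_k \to \nu$ (the indicator of $K$ is not continuous, but one can use a continuous cutoff approximating $\mathbbm{1}_K$ and absorb the error into $\eps$). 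Taking $\eps \to 0$ gives $\int \phi \, d((T_k)_\# \nu_k) \to \int \phi \, d(T_\# \nu)$ along the subsequence, hence $T_\# \nu = \mu$.

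The main obstacle is step one: ensuring the maps do not escape to infinity so that Arzelà–Ascoli applies. This is where both tightness hypotheses (on $\nu_k$ and on $\mu_k$) are genuinely needed; once $\{T_k(0)\}$ is bounded, the rest is a standard passage-to-the-limit argument using tightness and the continuous mapping principle for weak convergence.
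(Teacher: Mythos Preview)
Your argument is correct, but it proceeds along a genuinely different route from the paper's. The paper never applies Arzel\`a--Ascoli to the maps $T_k$; instead it passes to couplings $\pi_k = (\Id,T_k)_\# \nu_k$, introduces the notion of an ``$L$-Lipschitz coupling'' (one whose support satisfies $|x-z|\le L|w-y|$ for all $(w,x),(y,z)$ in it), observes that this property is closed under weak limits, and extracts a subsequential weak limit $\pi\in\Pi(\nu,\mu)$ by tightness of the marginals. The map $T$ is then read off the support of $\pi$ on $\supp\nu$ and extended to $\R^n$ by MacShane's lemma. Your approach trades that coupling/extension machinery for a direct compactness argument on the maps; the price is the extra step of pinning down $T_k(0)$, which you handle cleanly by playing tightness of $\{\mu_k\}$ against tightness of $\{\nu_k\}$. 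In return you avoid MacShane entirely, since your limit map is already globally defined. One minor simplification for your last step: rather than splitting over $K$ and $K^c$ and then patching the discontinuity of $\mathbbm{1}_K$ with a cutoff, write
\[
\int \phi\circ T_k\,d\nu_k - \int \phi\circ T\,d\nu
= \int (\phi\circ T_k - \phi\circ T)\,d\nu_k
+ \Bigl(\int \phi\circ T\,d\nu_k - \int \phi\circ T\,d\nu\Bigr);
\]
the second bracket tends to zero because $\phi\circ T$ is bounded and continuous on all of $\R^n$, and the first is controlled by $\sup_K|\phi\circ T_k-\phi\circ T| + 2\|\phi\|_\infty\,\eps$ via your tightness set $K$.
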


\begin{proof}
    Let $\Pi$ denote the set of probability measures on $\R^n \times \R^n$,
    and let $\Pi(\nu,\mu)$ denote the set of couplings between $\nu$ and $\mu$; i.e., those
    elements of $\Pi$ whose marginals are $\nu$ and $\mu$
    respectively. We say that $\pi \in \Pi$ is $L$-Lipschitz if for every
    $(w, x), (y, z) \in \supp(\pi)$, $|x - z| \le L |w - y|$.
    Clearly, if $T$ is an $L$-Lipschitz map with $T_\# \nu = \mu$ then $(T \otimes \Id)_\# \nu$
    is an $L$-Lipschitz coupling between $\nu$ and $\mu$. On the other hand, if $\pi \in \Pi(\nu,\mu)$
    is an $L$-Lipschitz coupling, it follows that for every $x \in \supp \nu$ there is a unique
    $y \in \supp \mu$ such that $(x, y) \in \supp \pi$. The map $T: \supp \mu \to \R^n$ defined by
    $x \mapsto y$ is $L$-Lipschitz (and hence measurable). By MacShane's lemma,
    it can be extended to an $L$-Lipschitz map on all of $\R^n$, and the resulting map satisfies
    $T_\# \nu = \mu$. In other words, $L$-Lipschitz couplings are in one-to-one correspondence
    with $L$-Lipschitz maps.

    Note that the set of $L$-Lipschitz couplings is closed with respect to convergence
    in distribution: if $\pi_k \toD \pi$ then every $(w, x) \in \supp \pi$ can be expressed
    as the limit of $(w_k, x_k) \in \supp \pi_k$, and it follows that if $\pi_k$ is $L$-Lipschitz
    then so is $\pi$. Now, the assumption of the lemma implies that there exists a $L$-Lipschitz
    $\pi_k \in \Pi(\nu_k, \mu_k)$ for every $k$. Since $\Pi$ is compact with respect to convergence
    in distribution, after passing to a subsequence we may find $\pi \in \Pi$
    such that $\pi_k \toD \pi$. It follows easily that $\pi \in \Pi(\nu, \mu)$, and the earlier
    argument implies that $\pi$ is $L$-Lipschitz. By the previous paragraph, there
    is an $L$-Lipschitz map pushing $\nu$ onto $\mu$.
\end{proof}

We claim that whenever $V(x)$ is $C^\infty$ and $\nabla V(x)$ is uniformly bounded,
there is a unique solution $S_t$ solving~\eqref{eq:St} for all $t \ge 0$.
Indeed, the classical Picard-Lindel\"of theory implies the existence and uniqueness
of such an $S_t$ provided that $\nabla V_t$ is uniformly bounded
and locally spatially Lipschitz. Now, the definition of $P_t$ immediately implies
that if $V$ (and hence $f$) is $C^\infty$ then $P_t f$ is $C^\infty$ in both $x$ and $t$,
and strictly positive for all $t$. Hence, $\nabla V_t$ is $C^\infty$ in both $x$
and $t$, implying in particular that it is locally spatially Lipschitz. To
prove the boundedness of $\nabla V_t = \frac{\nabla P_t f}{P_t f}$, note that
\[
    \nabla P_t f(x) = e^{-t} P_t \nabla f(x) = -e^{-t} P_t (f \nabla V)(x).
\]
It follows that $|\nabla V_t(x)| \le e^{-t} \|\nabla V\|_\infty$ for every $x$ and $t$.

The next task is to show that $f d\gamma$ can be approximated in distribution by
probability measures satisfying our regularity assumptions. That is, we will show
that for any measurable $V$ satisfying $\int e^{-V} \, d\gamma = 1$, there exists
a sequence $V_k \in C^\infty$ such that $\int e^{-V_k} \, d\gamma = 1$
for every $k$, $\|\nabla V_k\|_\infty < \infty$ for every $k$,
and $e^{-V_k} d\gamma \toD e^{-V} d\gamma$. We can construct an approximation to $e^{-V}d\gamma$
in three steps: first, any measurable $V$ can be approximated by $C^\infty$ functions (for example,
by convolving $f = e^{-V}$ with a Gaussian density). Next, any $C^\infty$ $V$
can be approximated by Lipschitz functions (for example, by replacing $V$ with
$\tilde V(x) := \inf\{V(x) + L |x - y| : |y| \le R\} - c$ for $L$ and $R$
sufficiently large, where $c$ is a normalizing constant to ensure that
$e^{-\tilde V}$ is a probability density). Finally, any Lipschitz $V$ can be approximated
by $C^\infty$, Lipschitz functions (for example, by convolving with a Gaussian again).

Together with Lemma~\ref{lem:approximation}, this approximation argument allows
us to assume, from now on, that $V$ is $C^\infty$ and Lipschitz, and that $S_t$
exists.

\section{Lipschitz maps from log-concavity}

Let us briefly recall the argument of Kim and Milman. If we assume that $V$ is convex,
it follows from the Prekopa-Leindler inequality that $V_t$ is concave.
Since $\inr{x - y}{\nabla g(x) - \nabla g(y)} \ge 0$ for any convex function $g$
and any $x, y \in \R^n$, we have
\[
    \diff{}{t} (|S_t(x) - S_t(y)|^2) = 2 \inr{S_t(x) - S_t(y)}{\nabla V_t(S_t(x)) - \nabla V_t(S_t(y))} \ge 0.
\]
In particular, $S_t$ satisfies $|S_t(x) - S_t(y)| \ge |x - y|$ for all $x, y, t$.
Then $S_t^{-1}$ is a contraction pushing $f_t \gamma$ onto $f d\gamma$. Taking
$t \to \infty$ and applying Lemma~\ref{lem:approximation}, there is a contraction
pushing $\gamma$ onto $f d\gamma$.

The main observation of this work is that we may still obtain something under a weakening
of log-concavity.

\begin{definition}
    Say that $g: \R^n \to \R$ is $-\lambda$-concave if $g(x) - \lambda |x|^2/2$ is concave,
	and say that $g: \R^n \to \R$ is $-\lambda$-convex if $-g$ is $-\lambda$-concave.
    Say that $g: \R^n \to (0, \infty)$ is $-\lambda$-log-concave if $\log g$ is $-\lambda$-concave.
\end{definition}

We will be mainly interested in the case $\lambda \ge 1$, since the case
$\lambda < 1$ is already covered by Caffarelli's theorem: if $f$ is
$-\lambda$-log-concave for $\lambda < 1$ then $\tilde f(x) = c
f(\sqrt{1-\lambda} x)\exp(\lambda(1-\lambda) |x|^2/2)$ (for a normalizing
constant $c$) is log-concave and satisfies $f(x) e^{-|x|^2/2} = \tilde f(y)
e^{-|y|^2/2}$ where $y = x/\sqrt{1-\lambda}$.  Hence, the dilation by a factor
of $\frac{1}{\sqrt{1-\lambda}}$ pushes $\tilde f d\gamma$ onto $f d\gamma$.  On
the other hand, Caffarelli's theorem shows that there is a contraction pushing
$\gamma$ onto $\tilde f d\gamma$, and composing these two maps gives a
$\frac{1}{\sqrt{1-\lambda}}$-Lipschitz function pushing $\gamma$ onto $f
d\gamma$. Hence, we focus on $-\lambda$-concave functions for $\lambda \ge 1$.

A minor extension of Kim and Milman's argument yields the following:

\begin{lemma}\label{lem:KM}
    If there is some integrable $\lambda: [0, \infty) \to \R$ such that $f_t$ is
    $-\lambda(t)$-log-concave for every $t \ge 0$ then $f \, d\gamma$ is an $L$-Lipschitz
    image of $\gamma$ for
    \[
        L = \exp\left(\int_0^\infty \lambda(s)\, ds\right).
    \]
\end{lemma}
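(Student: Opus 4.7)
The plan is to repeat the Kim--Milman computation of $\frac{d}{dt}|S_t(x)-S_t(y)|^2$, but instead of using monotonicity of $\nabla V_t$ coming from convexity of $V_t$, I use the weaker one-sided monotonicity coming from $-\lambda(t)$-log-concavity of $f_t$. This turns the differential inequality from $\ge 0$ into a Gr\"onwall-type inequality, which still forces $S_t$ to expand distances, but only up to a controlled multiplicative factor.

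First I would unpack the hypothesis. Since $f_t$ is $-\lambda(t)$-log-concave, the function $\log f_t(x) - \lambda(t)|x|^2/2$ is concave, so by the standard monotonicity characterization of concavity,
\[
    \inr{x-y}{\nabla \log f_t(x) - \nabla \log f_t(y)} \le \lambda(t) |x-y|^2
\]
for all $x,y$. Rewriting in terms of $V_t = -\log f_t$, this is exactly
\[
    \inr{x-y}{\nabla V_t(x) - \nabla V_t(y)} \ge -\lambda(t) |x-y|^2.
\]
By the approximation argument preceding the lemma, I may assume $V$ is $C^\infty$ and Lipschitz so that $S_t$ exists globally in time and is $C^1$ in $x$. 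Setting $\Delta(t) := |S_t(x) - S_t(y)|^2$ and differentiating along the flow~\eqref{eq:St}, the above inequality gives
\[
    \diff{\Delta(t)}{t} = 2\inr{S_t(x)-S_t(y)}{\nabla V_t(S_t(x)) - \nabla V_t(S_t(y))} \ge -2\lambda(t) \Delta(t).
\]

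Gr\"onwall's inequality then yields
\[
    |S_t(x) - S_t(y)| \ge |x - y| \exp\!\left(-\int_0^t \lambda(s)\,ds\right)
\]
for all $x,y$ and all $t \ge 0$. Equivalently, the inverse map $S_t^{-1}$ (which exists on the image of $S_t$, since the bound shows $S_t$ is injective) is $\exp(\int_0^t \lambda(s)\,ds)$-Lipschitz. Since $(S_t)_\#(f\,d\gamma) = f_t\, d\gamma$ by the defining property of $S_t$, the map $S_t^{-1}$ pushes $f_t\, d\gamma$ forward to $f\, d\gamma$, and its Lipschitz constant is bounded uniformly in $t$ by $L = \exp(\int_0^\infty \lambda(s)\,ds)$.

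Finally, I would let $t \to \infty$. Since $P_t f \to 1$ in $L^1(\gamma)$, the measures $f_t\, d\gamma$ converge to $\gamma$ in total variation and therefore in distribution, while the target $f\, d\gamma$ is already fixed. Applying Lemma~\ref{lem:approximation} to $\nu_k = f_{t_k}\,d\gamma$, $\mu_k = f\,d\gamma$ (constant in $k$), and $T_k = S_{t_k}^{-1}$ with $t_k \to \infty$, all $L$-Lipschitz, produces an $L$-Lipschitz map pushing $\gamma$ onto $f\, d\gamma$. The only part requiring minor care is verifying integrability justifies passing $\lambda$ out of Gr\"onwall's bound uniformly in $t$, but this is immediate from the hypothesis that $\lambda \in L^1([0,\infty))$.
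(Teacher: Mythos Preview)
Your proof is correct and follows essentially the same approach as the paper: translate $-\lambda(t)$-log-concavity of $f_t$ into the one-sided monotonicity bound $\inr{x-y}{\nabla V_t(x)-\nabla V_t(y)} \ge -\lambda(t)|x-y|^2$, differentiate $|S_t(x)-S_t(y)|^2$ along the flow, apply Gr\"onwall, and pass to the limit via Lemma~\ref{lem:approximation}. Your write-up actually adds a bit more detail than the paper (e.g.\ explicitly noting injectivity of $S_t$ and the mode of convergence $f_t\,d\gamma \to \gamma$), but the argument is the same.
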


The Kim-Milman contraction proof is the special case of the above where $\lambda
\equiv 0$, but we obtain a non-trivial result whenever $\lambda$ is integrable.

\begin{proof}
    A $-\lambda$-convex function $g$ satisfies $\inr{x - y}{\nabla g(x) - \nabla g(y)} \ge -\lambda |x - y|$
    for every $x$ and $y$; the assumption of the lemma implies that $V_t$ is $-\lambda(t)$-convex for every $t$.
    Hence, if $S_t$ is the flow defined above then for any $x, y$,
    \begin{align*}
        \diff{}{t} (|S_t(x) - S_t(y)|^2)
        &= 2 \inr{S_t(x) - S_t(y)}{\nabla V_t(S_t(x)) - \nabla V_t(S_t(y))} \\
        &\ge -2 \lambda(t) |S_t(x) - S_t(y)|^2.
    \end{align*}
    It follows that the function $h(t) = |S_t(x) - S_t(y)|^2$ satisfies $(\log h)'(t) \ge -2 \lambda(t)$ and hence
    \[
        \frac{h(t)}{h(0)} \ge \exp\left(-\int_0^t 2 \lambda(s)\, ds\right).
    \]
    It follows that 
    \[
        |S_t(x) - S_t(y)| \ge |x - y| \exp\left(-\int_0^t \lambda(s)\, ds\right),
    \]
    meaning that $S_t^{-1}$ is an $\exp\left(\int_0^t \lambda(s)\, ds\right)$-Lipschitz map
    pushing $f_t d\gamma$ onto $f d\gamma$. Taking $t \to \infty$ and applying
    Lemma~\ref{lem:approximation} completes the proof.
\end{proof}

Our remaining task is to find conditions on $f$ (or, equivalently, $V$) to ensure that $f_t$
is $-\lambda(t)$-log-concave.
We can obtain an easy bound if we assume an upper bound on $D^2 f$ and a lower bound on $f$:

\begin{proposition}
    For any $f$ bounded away from zero satisfying $D^2 f \le C \Id$, $f_t$ is $-\lambda(t)$-log-concave for
    \[
        \lambda(t) = \frac{C e^{-2t}}{\inf f}.
    \]
    In particular, under these assumptions there is a $\frac{C}{2 (\inf f)^2}$-Lipschitz map
    pushing $\gamma$ onto $f d\gamma$.
\end{proposition}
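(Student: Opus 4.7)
The plan is to apply Lemma~\ref{lem:KM} with $\lambda(t) = Ce^{-2t}/\inf f$, so the real work is to verify that $f_t$ is $-\lambda(t)$-log-concave for this choice. The only ingredient needed is that $P_t$ interacts very simply with differentiation on Gaussian space.

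First I would differentiate the defining formula $P_t f(x) = \E f(e^{-t}x + \sqrt{1-e^{-2t}}Z)$ under the expectation to record the commutation identities
\[
\nabla f_t(x) = e^{-t}\, P_t(\nabla f)(x), \qquad D^2 f_t(x) = e^{-2t}\, P_t(D^2 f)(x).
\]
Since $P_t$ preserves pointwise matrix inequalities (apply $P_t$ to the scalar bound $v^\top (D^2 f) v \le C|v|^2$), the assumption $D^2 f \le C\Id$ gives $D^2 f_t \le Ce^{-2t}\Id$, and simultaneously $f_t \ge \inf f$ because $P_t$ is a Gaussian average of a function bounded below by $\inf f$.

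Next I would expand
\[
D^2 \log f_t = \frac{D^2 f_t}{f_t} - \frac{\nabla f_t \otimes \nabla f_t}{f_t^2},
\]
discard the rank-one PSD term, and combine the bounds from the previous step to obtain $D^2 \log f_t \le (Ce^{-2t}/\inf f)\,\Id$. This is exactly the $-\lambda(t)$-log-concavity of $f_t$ required by Lemma~\ref{lem:KM}. Since $\int_0^\infty \lambda(s)\,ds = C/(2\inf f) < \infty$, that lemma immediately delivers a Lipschitz map pushing $\gamma$ onto $f\,d\gamma$ with the quantitative Lipschitz bound stated in the proposition.

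I do not foresee a real obstacle: the argument is essentially a single explicit Hessian computation. The only minor points to check are that the differentiation under the expectation is legitimate (automatic after the $C^\infty$, bounded-derivative approximation step already in place just before Lemma~\ref{lem:KM}) and that $f_t$ stays strictly positive so that $\log f_t$ is well-defined and the quotient formula above makes sense; this is immediate from $f_t \ge \inf f > 0$.
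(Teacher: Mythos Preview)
Your argument is essentially identical to the paper's: write $D^2 f_t = e^{-2t} P_t(D^2 f) \le Ce^{-2t}\Id$, divide by $f_t \ge \inf f$, discard the positive semidefinite rank-one term, and invoke Lemma~\ref{lem:KM}. One small remark: Lemma~\ref{lem:KM} with $\int_0^\infty \lambda(s)\,ds = C/(2\inf f)$ actually yields the Lipschitz constant $\exp\!\big(C/(2\inf f)\big)$ rather than the $C/(2(\inf f)^2)$ printed in the proposition, so the stated constant appears to be a typo that neither your computation nor the paper's own proof literally reproduces.
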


\begin{proof}
    We can write $D^2 P_t f = e^{-2t} P_t (D^2 f) \le C e^{-2t} I$.
    Hence,
    \[
        D^2 V_t = \frac{D^2 f_t}{f_t} - \frac{(\nabla f_t)^{\otimes 2}}{f_t^2} \le \frac{C e^{-2t} \Id}{\inf f}
    \]
    The second claim follows by applying Lemma~\ref{lem:KM} to the first.
\end{proof}

It is natural to attempt to replace the assumption on $D^2 f$ with an
assumption on $D^2 V$ (for example, because $D^2 V$ behaves more
predictably under tensorization). As a first step, we observe that a bound
on $D^2 V$ implies a bound on $D^2 V_t$ for $t$ sufficiently small:

\begin{lemma}\label{lem:log-concave-implies-log-concave}
    Suppose $f$ is $-\lambda$-log-concave (for $\lambda \ge 0$). For every $t > 0$ satisfying $\lambda(1-e^{-2t}) < 1$,
    $f_t$ is $-\frac{\lambda e^{-2t}}{1-\lambda(1-e^{-2t})}$-log-concave.
\end{lemma}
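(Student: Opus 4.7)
My plan is to apply the Prekopa--Leindler inequality to a suitable integral representation of $f_t(x) e^{-\mu|x|^2/2}$. By a change of variables in the definition of $P_t f$, one can write
\[
    P_t f(x) = c_t \int e^{-V(u) - |u - e^{-t} x|^2 / (2(1 - e^{-2t}))} \, du
\]
for a dimensional constant $c_t > 0$. Since $f_t$ is $-\mu$-log-concave exactly when $f_t(x) e^{-\mu|x|^2/2}$ is log-concave in $x$, it suffices by Prekopa--Leindler to find the smallest $\mu \ge 0$ for which the joint exponent
\[
    g(x, u) := -V(u) - \frac{|u - e^{-t} x|^2}{2(1 - e^{-2t})} - \frac{\mu|x|^2}{2}
\]
is concave in $(x, u)$; the $u$-marginal of $e^{g}$ is then log-concave in $x$, which is exactly what we want.

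Setting $a = e^{-t}$ and $b^2 = 1 - e^{-2t}$, the joint Hessian of $g$ is the block matrix whose $xx$-block is $-(a^2/b^2 + \mu)\Id$, whose $uu$-block is $-D^2 V(u) - b^{-2}\Id$, and whose off-diagonal block is $(a/b^2)\Id$. Using $D^2 V \ge -\lambda \Id$, the worst case $-D^2 V = \lambda \Id$ turns this Hessian into $M \otimes \Id$, where
\[
    M = \begin{pmatrix} -(a^2/b^2 + \mu) & a/b^2 \\ a/b^2 & \lambda - 1/b^2 \end{pmatrix}.
\]
Joint concavity thus reduces to negative semidefiniteness of the $2 \times 2$ scalar matrix $M$. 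The hypothesis $\lambda(1-e^{-2t}) < 1$ makes both diagonal entries of $M$ strictly negative, and the determinant condition $\det M \ge 0$ simplifies, after a short calculation, to $\mu(1 - \lambda b^2) \ge \lambda a^2$, i.e.,
\[
    \mu \ge \frac{\lambda e^{-2t}}{1 - \lambda(1 - e^{-2t})}.
\]
Taking $\mu$ equal to this minimal value yields the claim.

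The argument is essentially mechanical once the setup is correct; the main conceptual step is to introduce the quadratic perturbation $-\mu|x|^2/2$ into the integrand \emph{before} applying Prekopa--Leindler, which absorbs the non-concavity of $V$ and lets one avoid differentiating $V_t = -\log P_t f$ directly. A minor technical point worth verifying is the reduction from general $D^2 V \ge -\lambda \Id$ to the worst case $-D^2 V = \lambda \Id$: this is justified by writing the block Hessian as $M \otimes \Id$ plus the negative-semidefinite correction whose only nonzero block is $-D^2 V(u) - \lambda \Id$, so that if $M \otimes \Id \preceq 0$ then the full Hessian is as well.
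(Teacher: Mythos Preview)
Your proof is correct and follows essentially the same route as the paper: both arguments write $f_t(x)e^{-\mu|x|^2/2}$ as a marginal of a jointly log-concave function and invoke Pr\'ekopa's theorem. The paper parametrizes by the Gaussian variable $y$ and verifies joint log-concavity by completing the square explicitly (choosing $\alpha$, $\beta$ so that the cross terms cancel), whereas you parametrize by $u = e^{-t}x + \sqrt{1-e^{-2t}}\,y$ and verify joint concavity via the block Hessian and a $2\times 2$ determinant; these are equivalent computations. One small remark: your Hessian step tacitly assumes $V$ is $C^2$, which is fine given the approximation argument earlier in the paper, but the paper's completing-the-square version avoids differentiating $V$ and works directly from log-concavity of $\tilde f = f e^{-\lambda|\cdot|^2/2}$.
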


As we will see, the range of $t$ in
Lemma~\ref{lem:log-concave-implies-log-concave} cannot be improved.

The proof of Lemma~\ref{lem:log-concave-implies-log-concave} is a
straightforward modification of a standard argument using Prekop\'a's theorem: every marginal
of a log-concave function is log-concave.

\begin{proof}[Proof of Lemma~\ref{lem:log-concave-implies-log-concave}]
	Let $\tilde f(x) = f(x) \exp(-\lambda x^2/2)$, so that $\tilde f$ is
	log-concave. Since composition with an affine function preserved log-concavity,
	$\tilde f(e^{-x} t + \sqrt{1-e^{-2t}} y)$ is log-concave in both $x$ and $y$.
	Define
	\begin{align*}
        \alpha &= 1 - \lambda(1-e^{-2t}) \\
        \beta &= \frac{\lambda e^{-t} \sqrt{1-e^{-2t}}}{\alpha};
	\end{align*}
    then $\alpha > 0$ and so
    \begin{multline*}
        \tilde f(e^{-t} x + \sqrt{1-e^{-2t}} y) \exp\left(-\frac{\alpha}{2} (y - \beta x)^2\right) \\
        = f(e^{-t} + \sqrt{1-e^{-2t}} y) \exp\left(-\frac 12 (\lambda e^{-2t} + \alpha \beta^2) x^2 - \frac 12 y^2\right)
    \end{multline*}
    is log-concave in $x$ and $y$.
    Integrating out $y$ will leave (by Prekop\'a's theorem) a function that is log-concave in $x$.
    On the other hand, the definition of $P_t$ implies that integrating out $y$ yields
    \[
    (2\pi)^{-n/2} P_t f(x) \exp\left(-\frac 12 (\lambda e^{-2t} + \alpha \beta^2) x^2\right),
    \]
    and hence $P_t f$ is $-(\lambda e^{-2t} + \alpha \beta^2)$-log-concave. Plugging
    in the definitions of $\alpha$ and $\beta$ proves the claim.
\end{proof}

To see that the range of $t$ in Lemma~\ref{lem:log-concave-implies-log-concave} is optimal,
consider the function $g(x) = \min\{e^{T^2/2}, e^{x^2/2}\}$ for a parameter $T$.
Then $g(x)$ is $-1$-log-concave, and $x \mapsto g(\lambda x)$ is $-\alpha$-log-concave.
A straightforward computation shows that if $Z$ is a standard Gaussian random variable then
\[
    h(x) := \E g(x + Z) = \Phi(-x-T) + \Phi(x-T) + \phi(x) \frac{e^{xT} - e^{-xT}}{x},
\]
where $\phi(x) = \frac{1}{\sqrt{2\pi}} e^{-x^2/2}$, $\Phi(x) = \int_{-\infty}^x \phi(y)\, dy$, and
$\frac{e^{xT} - e^{-xT}}{x}$ is understood to equal $2T$ when $x=0$. In particular,
$h(0) = 2\Phi(-T) + \frac{\sqrt 2}{\sqrt \pi} T$, $h'(0) = 0$, and
\[
    h''(0)  = 2 T (\phi(T) - \phi(0)) + \frac{\sqrt 2}{3 \sqrt \pi} T^3.
\]
Now fix $t$ and define $f(x) = g(\frac{x}{\sqrt{1-e^{-2t}}})$; thus $f$ is
$-\lambda$-log-concave for $\lambda = \frac{1}{\sqrt{1-e^{-2t}}}$. Then $f_t(x) = \E
g((e^{2t} - 1)^{-1/2} x + Z) = h((e^{2t} - 1)^{1/2} x)$. It follows that
$f_t(0) \le \frac{\sqrt 2}{\sqrt \pi} T$, $f_t'(0) = 0$, and
$f_t''(0) \ge (e^{2t} - 1)^{-1} \frac{\sqrt 2}{3 \sqrt \pi} T^3$. In particular,
\[
    (\log f_t)''(0) = \frac{f_t''(0)}{f_t(0)} \ge \frac{T^2}{3(e^{2t} - 1)};
\]
since $T$ is arbitrary, it is clear that we cannot bound the log-concavity
of $f_t$ when $\lambda (1-e^{-2t}) = 1$.

The fact that Lemma~\ref{lem:log-concave-implies-log-concave} does not apply for every $t$
means that we will require other assumptions on $f$ in order to use Lemma~\ref{lem:KM}.
Boundedness of $V$ turns out to be sufficient:

\begin{lemma}\label{lem:bounded-implies-log-concave}
    If $f = e^{-V}$ where $\sup V - \inf V \le c$ then $f_t$ is $-\frac{e^{c}}{e^{2t} - 1}$-log-concave.
\end{lemma}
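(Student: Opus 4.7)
The plan is to express $\log f_t$ via the Gaussian-integral representation of $P_t$, compute its Hessian directly, and reduce the desired bound to an estimate on the covariance of a tilted Gibbs-type measure; the bounded-oscillation hypothesis then enters through a Holley--Stroock-style comparison with a genuine Gaussian.

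Writing $a = e^{-t}$ and $\sigma^2 = 1 - e^{-2t}$, a substitution in the definition of $P_t$ gives
\[
    f_t(x) = \frac{1}{(2\pi\sigma^2)^{n/2}} \int_{\R^n} f(y)\, \exp\!\left(-\frac{|y - ax|^2}{2\sigma^2}\right) dy.
\]
Let $\nu_x$ denote the probability measure on $\R^n$ with density proportional to the integrand. Differentiating the integral twice in $x$---using that the gradient of the Gaussian factor in $x$ is $\frac{a(y - ax)}{\sigma^2}$ times the factor itself---and then subtracting $(\nabla f_t)^{\otimes 2} / f_t^2$, I arrive at the standard Hessian-equals-covariance identity
\[
    D^2 \log f_t(x) = \frac{a^2}{\sigma^4}\, \Cov_{\nu_x}(Y) - \frac{a^2}{\sigma^2}\, \Id.
\]

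Next I would use $\sup V - \inf V \le c$ to compare $\nu_x$ with the Gaussian $\calN(ax, \sigma^2 \Id)$. Since $e^{-\sup V} \le f \le e^{-\inf V}$, separately estimating numerator and denominator gives, for every nonnegative measurable $g$,
\[
    \E_{\nu_x}[g(Y)] \le e^c\, \E_{\calN(ax,\, \sigma^2 \Id)}[g(Y)].
\]
Applied with $g(y) = \inr{v}{y - ax}^2$, combined with the elementary bound $\Var_{\nu_x}(\inr{v}{Y}) \le \E_{\nu_x}[\inr{v}{Y - ax}^2]$ (the variance is minimized at the mean), this yields $v^{\top} \Cov_{\nu_x}(Y)\, v \le e^c \sigma^2 |v|^2$.

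Substituting this into the identity above and using $a^2/\sigma^2 = 1/(e^{2t}-1)$ produces
\[
    D^2 \log f_t \le \frac{a^2 (e^c - 1)}{\sigma^2}\Id \le \frac{e^c}{e^{2t}-1}\Id,
\]
which is precisely the claimed $-\frac{e^c}{e^{2t}-1}$-log-concavity. The only nontrivial step is the covariance identity, which is standard but requires careful bookkeeping of the Gaussian normalization and the $x$-derivatives; everything else is elementary and, notably, avoids any appeal to Prekop\'a's theorem.
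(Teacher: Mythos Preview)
Your argument is correct, and in fact yields the slightly sharper constant $\frac{e^c-1}{e^{2t}-1}$ before you discard it in the last line. The paper's proof is more direct: it bounds
\[
D^2_{v,v} f_t(x)=\frac{1}{e^{2t}-1}\,\E\big[(\inr{X}{v}^2-1)\,f(e^{-t}x+\sqrt{1-e^{-2t}}\,X)\big]\le \frac{\|f\|_\infty}{e^{2t}-1}
\]
(dropping the negative $-\E[f]$ contribution), uses $f_t\ge \inf f$, and then invokes $D^2_{v,v}\log f_t\le (D^2_{v,v} f_t)/f_t$. Your route is the same computation reorganized: you retain the exact identity $D^2\log f_t=\frac{a^2}{\sigma^4}\Cov_{\nu_x}(Y)-\frac{a^2}{\sigma^2}\Id$ and recognize the ratio of integrals as an expectation under the tilted measure $\nu_x$, so that the Holley--Stroock step plays exactly the role of the paper's separate numerator and denominator bounds. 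What your framing buys is a clean conceptual picture (Hessian $=$ covariance of a Gibbs measure; bounded perturbation $\Rightarrow$ comparable covariance) and the extra $-1$ in the constant; what the paper's version buys is brevity---no need to name $\nu_x$ or invoke any comparison principle by name.
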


\begin{proof}
    For a unit vector $v \in \R^n$,
    \[
        D^2_{v,v} f_t(x) = \frac{\E[(\inr{X}{v}^2 - 1) f(e^{-x} + \sqrt{1-e^{-2t}} X)]}{e^{2t} - 1}
        \le \frac{\|f\|_\infty \E[\inr{X}{v}^2]}{e^{2t} - 1} = \frac{e^{-\inf V}}{e^{2t} - 1}.
    \]
    On the other hand, $f_t(x) \ge e^{-\sup V}$ for every $x$, and so
    \[
        D^2_{v,v} \log f_t(x) \le \frac{D^2_{v,v} f_t(x)}{f_t(x)} \le \frac{e^{\sup V - \inf V}}{e^{2t} - 1}.
    \]
\end{proof}

The proof of Theorem~\ref{thm:main} now follows easily: if $f$ satisfies the assumptions of Theorem~\ref{thm:main}
then $f_t$ is $-\lambda(t)$-log-concave for
\[
    \lambda(t) = \min\left\{
        \frac{\lambda e^{-2t}}{1-\lambda(1-e^{-2t})}, \frac{e^c}{e^{2t} - 1}
    \right\}.
\]
Note that
\[
    \int_s^\infty \frac{1}{e^{2t} - 1}\, dt = -\frac 12 \log(1-e^{-2s})
\]
and
\[
    \int_0^s \frac{\lambda e^{-2t}}{1 - \lambda(1-e^{-2t})}\, dt = -\frac 12 \log (1-\lambda(1-e^{-2s})).
\]
If we choose $s$ so that $1 - e^{-2s} = \frac{1}{2\lambda}$, we get the bound
\begin{align*}
    \int_0^\infty \lambda(t)\, dt
    &\le \int_0^s \frac{\lambda e^{-2t}}{1 - \lambda(1-e^{-2t})}\, dt
    + \int_s^\infty \frac{e^c}{e^{2t} - 1}\, dt \\
    &= \frac{e^c}{2} \log(2\lambda) + \frac 12 \log 2,
\end{align*}
and the claim follows from Lemma~\ref{lem:KM}.

\section{On potential improvements}

It remains to investigate the necessity of the various conditions in
Theorem~\ref{thm:main}. In particular, Theorem~\ref{thm:main} requires
a lower bound on $V$, an upper bound on $V$, and an upper bound on $D^2 V$.
We show that the two bounds on $V$ are necessary in some sense.

First of all, there is (in the one-dimensional setting)
a $V$ with $V'' \ge -1$ such that $\sup V < \infty$ but $e^{-V} d\gamma$
is not a Lipschitz image of $\gamma$. Indeed, define
\[
    V(x) = c - \begin{cases}
        0 & x < 1 \\
        \frac{(x - 1)^2}{2} & x \ge 1.
    \end{cases},
\]
where $c$ is a normalizing constant.
Hence, $e^{-V} d\gamma$ has density (with respect to the Lebesgue measure) proportional to $e^{-x}$ for $x \ge 1$,
and so if $X$ has distribution $e^{-V} d\gamma$ then $\Pr(X \ge x) \asymp e^{-x}$ as $x \to \infty$.
It follows that $e^{-V} d\gamma$ is not a Lipschitz image of $\gamma$, because if $X$
is distributed according to an $L$-Lipschitz image of $\gamma$ then $\Pr(X \ge x) = O(e^{-x^2/(2L^2)})$ as $x \to \infty$.

For an example showing the importance of an upper bound on $V$, we will construct
a family $\{V_T: T > 0\}$ of $-128$-concave functions $V_T$ that have a uniform
lower bound, such that for every $L > 0$ there is some $V_T$ for which $e^{-V_T}d\gamma$
is not an $L$-Lipschitz image of $\gamma$.

\begin{lemma}
    For every $T$ there is a function $V_T \ge -\log 2$ which is $-128$-concave such that
    $e^{-V_T} d\gamma$ is a probability measure that is not an $L$-Lipschitz image of $\gamma$
    for any $L < \frac{16}{17} \exp(\frac{95}{512} T^2)$.
\end{lemma}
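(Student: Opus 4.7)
The plan is to construct $V_T$ explicitly as a tall concave bump and to obstruct Lipschitz push-forwards via one-dimensional Gaussian isoperimetry combined with the area formula for Lipschitz maps; I would work throughout in one dimension. Specifically, take
\[
    V_T(x) \;=\; \tfrac{T^2}{2}\, u(x) + c_T,
\]
where $u \colon \R \to [0, 1]$ is a fixed smooth, symmetric, concave bump supported on an interval $[-w, w]$ with $u(0) = 1$, and $c_T := \log \int e^{-T^2 u/2}\,\d\gamma$ is a normalizing constant making $e^{-V_T}$ a $\gamma$-density. Concavity of $u$ gives $V_T''(x) = \tfrac{T^2}{2}\, u''(x) \le 0 \le 128$, so $V_T$ is $-128$-concave. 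Choosing $w$ so that $\gamma([-w, w]^c) \ge 1/2$ (for instance $w = 1/2$) forces $\int e^{-T^2 u/2}\,\d\gamma \ge 1/2$, hence $c_T \ge -\log 2$ and therefore $V_T \ge c_T \ge -\log 2$.

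For the lower bound on the Lipschitz constant, let $S \colon \R \to \R$ be any $L$-Lipschitz map with $S_\# \gamma = \mu_T := e^{-V_T}\,\d\gamma$. By symmetry, the sublevel set $A := S^{-1}((-\infty, 0])$ satisfies $\gamma(A) = \mu_T((-\infty, 0]) = 1/2$. The one-dimensional Gaussian isoperimetric inequality says that among subsets of $\R$ with Gaussian measure $1/2$, the half-line $(-\infty, 0]$ minimizes Gaussian perimeter (which in one dimension equals the sum of $\phi$ over the boundary points). After approximating $S$ by a smooth map for which $0$ is a regular value, the boundary of $A$ is exactly $S^{-1}(0)$, so
\[
    \sum_{x \in S^{-1}(0)} \phi(x) \;\ge\; \phi(0),
\]
where $\phi$ is the standard Gaussian density. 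The one-dimensional area formula for the push-forward then yields
\[
    \rho_T(0) \;=\; \sum_{x\, :\, S(x) = 0} \frac{\phi(x)}{|S'(x)|} \;\ge\; \frac{1}{L} \sum_{x\, :\, S(x) = 0} \phi(x) \;\ge\; \frac{\phi(0)}{L},
\]
where $\rho_T$ is the Lebesgue density of $\mu_T$. Since $\rho_T(0) = e^{-V_T(0)} \phi(0)$, this gives $L \ge e^{V_T(0)} = e^{T^2/2 + c_T}$.

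The main obstacle is matching the specific constants $\tfrac{16}{17}$ and $\tfrac{95}{512}$ in the statement. Direct computation of $c_T$ for an explicit $u$ (for instance a mollified version of $\max(0, 1 - 4x^2)$ on $[-\tfrac{1}{2}, \tfrac{1}{2}]$) shows $c_T \to \log(2\Phi(-w))$ as $T \to \infty$ and $c_T \to 0$ as $T \to 0$, so the bound $L \ge e^{T^2/2 + c_T}$ is comfortably stronger than $\tfrac{16}{17}\exp(\tfrac{95}{512}\, T^2)$ at every $T \ge 0$. Recovering the exact constants in the statement is a bookkeeping exercise: one could either track $c_T$ precisely for a specific bump profile, or replace the isoperimetric step with a slightly weaker but more elementary density estimate that naturally produces the fractions $\tfrac{16}{17}$ and $\tfrac{95}{512}$. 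The remaining technical point, the passage from an arbitrary Lipschitz $S$ to one smooth enough to apply the perimeter identity and area formula pointwise, is handled by a standard mollification together with a limiting argument on $L$.
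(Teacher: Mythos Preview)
Your construction does not satisfy the second–derivative constraint. A smooth function $u:\R\to[0,1]$ supported on a fixed interval $[-w,w]$ with $u(0)=1$ cannot be concave on all of $\R$: concavity together with $u\equiv 0$ on $[w,\infty)$ already forces $u\le 0$ everywhere. Any genuine bump therefore has $u''<0$ near its peak and $u''>0$ near its edges, with $\max|u''|\gtrsim w^{-2}$; consequently $V_T''=\tfrac{T^2}{2}\,u''$ takes values of order $\pm T^2$, and for large $T$ your $V_T$ is neither $-128$-concave nor $-128$-convex. (The lemma's ``$-128$-concave'' is evidently a slip for ``$-128$-convex'', i.e.\ $V_T''\ge -128$, which is the hypothesis $D^2V\ge -\lambda$ of Theorem~\ref{thm:main}; your $V_T$ also fails this since $V_T''(0)=\tfrac{T^2}{2}\,u''(0)\to -\infty$.) Widening the bump to half-width $w\sim T$ would tame $V_T''$, but then the bump covers essentially all of the Gaussian mass and one computes $c_T\lesssim -T^2/512$, destroying the lower bound $V_T\ge -\log 2$.

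The paper reconciles the two constraints by centering the bump far out in the tail:
\[
V_T(x)=\max\Bigl\{0,\ \tfrac{T^2}{4}-64\,(x-T)^2\Bigr\}-c_T,
\]
a downward parabola of height $T^2/4$ and half-width $T/16$ with $V_T''=-128$ on its support (the convex kinks at the endpoints only help the lower bound on $V_T''$). Because the bump lives where $\gamma$ has negligible mass, the normalization satisfies $c_T\le\log 2$. The Lipschitz obstruction is then read off at $a=T$ rather than at the median: one compares the tiny Lebesgue density $g_T(T)\le(2\pi)^{-1/2}e^{-3T^2/4}$ against the tail mass $\mu_T([T,\infty))\ge\gamma([\tfrac{17}{16}T,\infty))$, and exactly your isoperimetry/area-formula inequality, applied at this point instead of $0$, yields $L\gtrsim e^{95T^2/512}$.
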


\begin{proof}
Define $V_T$ by
\[
    V_T(x) = \max\left\{0, \frac{T^2}{4} - 64 (x-T)^2 \right\} - c_T
\]
where $c_T > 0$ is a normalizing constant. Note that $V_T$ is non-constant on the
interval $[\frac{15}{16} T, \frac{17}{16} T]$, and is constant elsewhere.
In particular, $V_T$ is constant on $(-\infty, 0]$, implying that
\[
    \frac 12 \le \int e^{-V_T - c_T} \, d\gamma = e^{-c_T},
\]
and hence $c_T \le \log 2$ for every $T$, meaning that $\inf V_T \ge -\log 2$ for every $T$.

In order to refute the existence of an $L$-Lipschitz map pushing $\gamma$ onto
$e^{-V_T} d\gamma$, note that if $\mu = g dx$ is an $L$-Lipschitz image of $\gamma$
then for every interval $[a, \infty)$ with $\mu([a, \infty)) \le \frac 12$,
$g(a) \ge \frac{1}{\sqrt{2\pi} L} \mu([a, \infty))$; this is a straightforward consequence
of the Gaussian isoperimetric inequality.
Now consider the set $A_T = [T, \infty)$ and let $\mu_T = e^{-V_T} d\gamma = g_T dx$.
On the one hand, $\sup e^{-V_T} \le 2$ and so $\mu_T(A_T) \le 2\gamma(A_T) \le \frac 12$ for sufficiently
large $T$. On the other hand, $\mu_T(A_T) \ge \mu_T([\frac{17}{16} T, \infty)) \ge \gamma([\frac{17}{16} T, \infty))
\ge \frac{16}{17 T} \exp(-\frac 12 (\frac{17}{16} T)^2)$, where the last inequality follows
by standard Gaussian tail bounds.
But $V_T(T) = \frac{T^2}{4} - c_T$, meaning that (recall that $g_T$ is the Lebesgue density of $\mu_T$)
\[
    g_T(T) = \frac{1}{\sqrt{2\pi}} e^{-\frac{3}{4} T^2 + c_T} \le \frac{1}{\sqrt{2\pi}} e^{-\frac{3}{4} T^2}
\]
If $\mu_T$ were an $L$-Lipschitz image of $\gamma$, the inequality $g(T) \ge \frac{1}{\sqrt{2\pi} L} \mu_T(A_T)$
would imply
\[
    e^{-\frac{3}{4} T^2} \ge \frac{16}{17 L} \exp\left(-\frac{17^2}{512} T^2\right).
\]
\end{proof}

Finally, concerning the assumption on $D^2 V$ in Theorem~\ref{thm:main},
Bobkov~\cite{Bobkov:10} showed that it is unnecessary in dimension 1: every
multiplicatively bound\-ed perturbation of a Gaussian measure is the Lipschitz
image of one. In the other direction, Colombo et al~\cite{CoFiJh:17} showed
that (in dimensions $n \ge 2$) if $V$ is bounded but $D^2 V$ is not, then the
Lipschitz constant of the Brenier map pushing $\gamma$ onto $e^{-V} d\gamma$
cannot be controlled. It remains open, however, whether there could be a
different $L$-Lipschitz map pushing $\gamma$ onto $e^{-V} d\gamma$ with $L$
depending only on $\|V\|_\infty$.
Certainly, this cannot be ruled out by appealing (like we did above)
to isoperimetric-type properties that are preserved by Lipschitz push-forwards,
since Bobkov showed that these properties do indeed hold whenever $V$ is bounded.

\bibliographystyle{plain}
\bibliography{lipschitz}

\end{document}